\newcommand{\abssec}[1]{\noindent\normalsize {\bfseries #1\quad }\ignorespaces}
\renewenvironment{abstract}{\abssec{Abstract}}{\par\vspace{.1in}}
\newenvironment{keywords}{\abssec{Key Words}}{\par\vspace{.1in}}
\newenvironment{AMSMOS}{\abssec{AMS subject
  classification}}{\par\vspace{.1in}}
\theoremstyle{plain}
\newtheorem{theorem}{Theorem}[section]
\newtheorem{lemma}[theorem]{Lemma}
\theoremstyle{definition}
\newtheorem{remark}[theorem]{Remark}
\newtheorem{example}[theorem]{Example}
\numberwithin{equation}{section}
\def\R{\mathbb{R}}
\renewcommand{\imath}{\mathrm{i}\,}
\definecolor{darkgreen}{rgb}{0.0, 0.5, 0.3}
\begin{document}

\title{\LARGE Regularity of the solution of the \\
  scalar Signorini problem in polygonal domains\thanks{Partially
    funded by the Deutsche Forschungsgemeinschaft (DFG, German
    Research Foundation) -- Projektnummer 188264188/GRK1754.}}

\author{Thomas Apel\thanks{\texttt{thomas.apel@unibw.de}, Institut
    f\"ur Mathematik und Computergest\"utzte Simulation, Universit\"at
    der Bundeswehr M\"unchen, D-85579 Neubiberg, Germany} \and Serge
  Nicaise\thanks{\texttt{serge.nicaise@uphf.fr}, Universit\'e
    Polytechnique Hauts-de-France, LAMAV, FR CNRS 2956, F-59313 -
    Valenciennes Cedex 9, France}}
\maketitle

\begin{abstract}
  The Signorini problem for the Laplace operator is considered in a
  general polygonal domain. It is proved that the coincidence set
  consists of a finite number of boundary parts plus isolated points.
  The regularity of the solution is described. In particular, we
    show that the leading singularity is in general
    $r_i^{\pi/(2\alpha_i)}$ at transition points of Signorini to
    Dirichlet or Neumann conditions but $r_i^{\pi/\alpha_i}$ at kinks
    of the Signorini boundary, with $\alpha_i$ being the internal
    angle of the domain at these critical points.
\end{abstract}

\begin{keywords}
  Signorini problem, coincidence set, regularity
\end{keywords}

\begin{AMSMOS}
  35B65; 
      49N60 
\end{AMSMOS}

\section{Introduction}

In this paper we consider the Signorini problem
\begin{align} \label{eq:bvp}
  -\Delta y &= 0 \quad\text{in }\Omega, \\
  y &= 0 \quad\text{on }\Gamma_D,\\
   \partial_n y&=0 \quad\text{on }\Gamma_N,\\
  \partial_n y&=u \quad\text{on }\Gamma_U,\\
  y\geq 0,\ \partial_n y\geq 0, \ y \partial_n y&=0 \quad\text{on }\Gamma_S,
 \label{eq:signorinibc}
\end{align}
with a boundary datum $u\in L^2(\Gamma_U)$. We assume that the
mutually disjoint,  relatively open sets $\Gamma_D$, $\Gamma_N$, $\Gamma_U$,
and $\Gamma_S$ satisfy
\begin{align}\label{boundaryparts}
  \bar \Gamma_D\cup \bar \Gamma_N\cup \bar \Gamma_U\cup \bar \Gamma_S
  =\Gamma=\partial\Omega, \quad
  \bar\Gamma_S\cap\bar\Gamma_U=\emptyset, \quad \Gamma_D\ne \emptyset
\end{align} 
with $\Gamma$ being the boundary of the bounded polygonal domain
$\Omega\subset\R^2$.  The boundary parts $\Gamma_N$ and $\Gamma_U$ are
distinguished because of the second assumption in
\eqref{boundaryparts}. The condition $\Gamma_D\ne \emptyset$ is
assumed to obtain a unique solution.  The notation and our interest in
the problem comes from an optimal control problem where $y$ is the
state variable and $u$ is the control variable. 

Problem \eqref{eq:bvp}--\eqref{eq:signorinibc} is sometimes considered
as the scalar version of the more important Signorini problem for the
Lam\'e equations (``linear elasticity with unilateral boundary
condition'') but it has its own application describing a steady-state
fluid mechanics problem in media with a semi-permeable boundary, see
\cite[Section 1.1.1]{GlowinskiLionsTremolieres81}.

Let $C=\{c_i\}_{i=1}^n$ be the set of critical boundary points, namely
all points where the type of the boundary condition changes, that is
$\Gamma\setminus(\Gamma_D\cup\Gamma_N\cup\Gamma_U\cup\Gamma_S)$,
and all corners of the domain.  Br\'{e}zis \cite{Brezis:71} (see
also \cite{ Fichera:71} for the elasticity system) showed for the
inhomogeneous equation in smooth domains with purely Signorini
boundary condition that the solution is $H^2$-regular, Grisvard and
Iooss \cite{Grisvard-Iooss:76} extended this result to the case of
convex domains. Moussaoui and Khodja \cite{MoussaouiKhodja1992} showed
$C^{1,\lambda}$-regularity away from $C$ for $\lambda<\frac12$, see
also Theorem \ref{t:regy}; they further discussed possible singular
behavior near the critical points, see also Theorem \ref{t:reg}. This
last description suggests the $H^t$-regularity with $t\in (2,5/2)$ of
the solution near $\Gamma_S$.  Consequently some authors
\cite{Belhachmi-Belgacem:03,Drouet-Hild:15,Wohlmuthetall:12} assume
such a regularity without a complete proof, and use it for their numerical
analysis of the problem.  However, for the analysis of the behavior
near the extremal points of $\Gamma_S$ and for sharper regularity
results one needs that the coincidence set
\begin{align}\label{def:GammaC}
  \Gamma_C=\{x\in\Gamma_S:y(x)=0\}
\end{align}
consists of a finite number of connected boundary parts
(``intervals'') plus isolated points. Otherwise the set of endpoints
of the coincidence set (the set of points where the condition $u=0$
changes to $u>0$) could possess accumulation points while the analysis
of the regularity near such points (or near corners of the domain)
assumes the existence of a $\delta$-neighborhood where the type of the
boundary condition does not change.  As a consequence there  are publications where the
structure of the coincidence set is formulated as an assumption, see,
e.g., \cite[Condition (A)]{ChristofHaubner18}.

One important result of our paper is the proof of this proposition in
Section \ref{sec:coincidence}.  Such a result was previously obtained
for the Signorini problem with the Lam\'e equations by Kinderlehrer
 in \cite{kinderlehrer:81Pisa,kinderlehrer:81JMPA} 
under the assumptions
\begin{itemize}
\item that the boundary of $\Omega$ is flat in a neighborhood of 
  $\Gamma_S$, more precisely that
  \[
  \Gamma_S=(-c,c)\times \{0\}\subset 
  \tilde \Gamma=(-\tilde c,\tilde c)\times \{0\} \subset \partial\Omega
  \]
  for some positive constants $c$ and $\tilde c$ such that $c<\tilde
  c$, and
\item that the part $\tilde \Gamma\setminus\Gamma_S$ is included into
  $\Gamma_N$. 
\end{itemize}
While the transfer to the Laplace equation and to the case that
$\tilde \Gamma\setminus\Gamma_C\subset\Gamma_D$ can be done with
similar ideas, the avoidance of the the first assumption above is not
straightforward. The main tool for our proof is a special conformal
mapping which preserves the differential operator in \eqref{eq:bvp}
and the normal derivative. It is not clear how to analyze other
equations or a domain with curved boundary.
For simplicity of presentation we assumed that the differential
equation in \eqref{eq:bvp} and the gap function in
\eqref{eq:signorinibc} are homogeneous. We admit that we cannot treat
the general case but we discuss  examples in Remark \ref{rem:f}.

With the knowledge of the structure of the coincidence set one can
analyze the regularity of the solution, see, e.\,g., the already
mentioned paper \cite{MoussaouiKhodja1992}  by Moussaoui and Khodja for
results in H\"older spaces. We discuss the regularity in Sobolev
spaces in Section \ref{sec:reg} where we use a form which is useful
for our forthcoming numerical analysis.

\section{\label{sec:coincidence}The coincidence set}


Problem \eqref{eq:bvp}--\eqref{eq:signorinibc} admits the following
variational formulation. By introducing the convex set
\[
  K=\{v\in H^1(\Omega): v=0 \text{ on } \Gamma_D \text{ and } v\geq 0 
  \text{ on } \Gamma_S\},
\]
the function $y\in K$ satisfies the variational inequality
\begin{align}
  \int_\Omega (\nabla y\cdot \nabla (v-y))\geq 
  \int_{\Gamma_U} u (v-y)\quad \forall v\in K.
  \label{eq:VFsignorini}
\end{align}
The solution of \eqref{eq:VFsignorini} exists and is  unique, see
for instance \cite[Section II.2.1]{KinderlehrerStampaccia}.

Let us start with a first regularity result of this solution. In
particular it shows that the solution is continuous near the Signorini
boundary such that the definition of $\Gamma_C$ in \eqref{def:GammaC}
makes sense. To this end, introduce a domain 
\begin{align}\label{def:W}
  W\subset\mathbb{R}^2:\ \bar\Gamma_S\subset W,\ 
  \bar\Gamma_U\cap\bar W=\emptyset,
\end{align}
see the illustration in Figure \ref{fig:W}, and let 
\begin{figure}
  \begin{center}
    \includegraphics[scale=1]{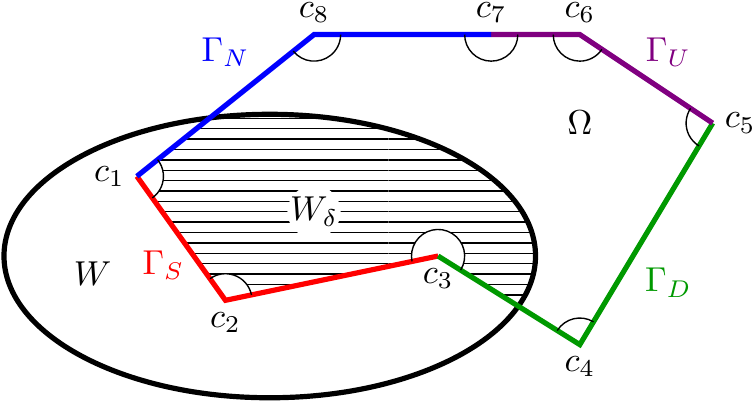} 
  \end{center}
  \caption{\label{fig:W}Illustration of the domains $\Omega$,
    $W$ and $W_\delta$.}
\end{figure}
\begin{align*}
  W_\delta:=(W\cap\Omega)\setminus\bigcup_{i=1}^n \bar B(c_i,\delta)
\end{align*} 
where the set $C=\{c_i\}_{i=1}^n$ of centers of balls with radius
$\delta>0$ is introduced in the introduction.
\pagebreak[3]

\begin{lemma}\label{t:regy}
  The solution $y\in K$ of \eqref{eq:VFsignorini} with $u\in
  H^{1/2}(\Gamma_U)$ satisfies
  \begin{equation}
    \label{reg:locale}
    y\in H^2(W_\delta)\cap C^{1,\lambda}(\bar W_\delta) 
    \quad\forall \lambda\in(0,\tfrac12)
  \end{equation}
  for any $\delta>0$.
\end{lemma}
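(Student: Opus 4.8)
The plan is to prove the two regularity statements locally and then patch them, exploiting that $\bar W_\delta$ stays a positive distance both from $\Gamma_U$ (by the defining property \eqref{def:W} of $W$) and from the critical set $C$ (because the balls $\bar B(c_i,\delta)$ have been removed). Hence along $\partial\Omega\cap\bar W_\delta$ the boundary is a finite union of open straight segments, each carrying a boundary condition of a single type---the homogeneous Dirichlet condition $y=0$ on $\Gamma_D$, the homogeneous Neumann condition $\partial_n y=0$ on $\Gamma_N$, or the Signorini condition on $\Gamma_S$---with no corner and no change of condition type inside. I would cover the compact set $\bar W_\delta$ by finitely many balls, each either contained in $\Omega$ or centred at a point of a single boundary piece and small enough to meet $\partial\Omega$ only in that piece, and establish the claimed regularity on each ball separately.

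On the interior balls $y$ is harmonic, hence $C^\infty$; on the balls centred on $\Gamma_D$ or $\Gamma_N$ the function solves a pure Dirichlet or Neumann problem for $\Delta y=0$ with homogeneous data on a flat boundary, so reflection across the line combined with interior elliptic regularity gives $y\in C^\infty$ up to the boundary there. The only substantial case is a ball centred at an interior point of $\Gamma_S$. Since the polygonal boundary is already straight no flattening is needed, so I would place the relevant segment of $\Gamma_S$ on $\{x_2=0\}$ with $\Omega$ locally in $\{x_2>0\}$ and read off from \eqref{eq:VFsignorini}, restricted to test functions supported in the ball, a localized variational inequality over the convex cone $\{v\ge 0\ \text{on}\ \Gamma_S\}$ with vanishing right-hand side.

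For the $H^2$ bound I would run the tangential difference quotient argument in the spirit of Br\'ezis \cite{Brezis:71}: the segment of $\Gamma_S$ and the constraint $v\ge 0$ are invariant under translation in the tangential $x_1$-direction, so the translates $y(\cdot\pm he_1)$ stay admissible, and inserting suitable convex combinations of $y$ and its translates as test functions yields a bound on the tangential difference quotients of $\nabla y$ that is uniform in $h$. This gives $\partial_{11}y,\partial_{12}y\in L^2$ locally, and then the equation in the form $\partial_{22}y=-\partial_{11}y$ supplies the missing normal second derivative, so that $y\in H^2$ near $\Gamma_S$. For the $C^{1,\lambda}$ bound with $\lambda<\tfrac12$ I would simply invoke the H\"older regularity of Moussaoui and Khodja \cite{MoussaouiKhodja1992}, which is stated precisely away from the critical points and therefore applies on each such ball; combined with the smoothness on the remaining balls it yields $y\in C^{1,\lambda}(\bar W_\delta)$. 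The hypothesis $u\in H^{1/2}(\Gamma_U)$ serves only to place $y$ in a function space to which these results apply, while the local conclusion on $W_\delta$ is otherwise insensitive to $u$ since $\bar W_\delta\cap\bar\Gamma_U=\emptyset$.

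The main obstacle is exactly this Signorini patch, and within it the circumstance that the endpoints of the coincidence set $\Gamma_C$ lying in the interior of $\Gamma_S$ are \emph{not} members of $C$ and hence need not be separated from $\bar W_\delta$. A local patch on $\Gamma_S$ may therefore straddle the free boundary, across which the effective condition switches between $y=0$ and $\partial_n y=0$ in an a priori unknown fashion, so that neither a pure Dirichlet nor a pure Neumann theory can be applied directly. What saves the argument is that the tangential translation technique never needs to locate the free boundary: it uses only the flatness of $\Gamma_S$ and the single sign condition $y\ge 0$ there, so the $H^2$ estimate holds uniformly across the coincidence and non-coincidence portions alike, and the sharpness $\lambda<\tfrac12$ reflects the $r^{3/2}$-type behaviour that is admissible at such free boundary points.
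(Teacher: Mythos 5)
Your overall strategy---cover the compact set $\bar W_\delta$ by finitely many balls, use harmonicity in the interior, smoothness up to the flat Dirichlet/Neumann pieces, and a dedicated argument on the Signorini patches---is exactly the paper's, and your diagnosis that the only substantial case is a ball centred on $\Gamma_S$ (possibly straddling the unknown free boundary) is also the paper's. The difference lies in how that case is handled. The paper does \emph{not} run the Br\'ezis difference-quotient argument itself: it constructs the global competitor $v=\eta w-\eta^2y+y$ with $w$ ranging over a local cone $K_x$, verifies $v\in K$, and after integration by parts recasts $\eta y$ as the solution of a variational inequality on a convex domain $O_x$ with right-hand side $g_x=-\nabla\cdot(y\nabla\eta)-\nabla y\cdot\nabla\eta+\eta y\in L^2(O_x)$; it then cites Grisvard--Iooss for $\eta y\in H^2(O_x)$, and in a second pass uses $y\in H^2\subset W^{1,p}$ to upgrade $g_x$ to $L^p$ and cites Moussaoui--Khodja for $C^{1,\lambda}$, $\lambda<\tfrac12$. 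Your route replaces the citation of Grisvard--Iooss by a direct tangential difference-quotient estimate; this is legitimate and more self-contained (it is essentially how such results are proved), at the cost of redoing known work. One point you gloss over is the actual technical content of the paper's proof: in a variational inequality one cannot simply ``restrict to test functions supported in the ball,'' because admissible competitors must lie in the global convex set $K$. Your translated-and-cut-off competitors $(1-\eta^2)y+\eta^2\,y(\cdot\pm he_1)$ do remain in $K$ by the translation invariance of the constraint on the flat segment, but this verification (and the resulting commutator terms involving $\nabla\eta$) is precisely where the work is, and should be written out. Finally, note that the paper's $C^{1,\lambda}$ step is not a bare citation: it needs the already-established $H^2$ regularity to bootstrap the localized right-hand side into $L^p$ for all $p<\infty$ before Moussaoui--Khodja's theorem applies, so your ``simply invoke'' should be read with that intermediate step in mind.
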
 

\begin{proof}
  We start with the proof of the property
  \begin{align}\label{eq:localreg}
    \forall x\in \bar W_\delta \quad \exists\, \varepsilon_x>0:\quad
    y\in H^2(W_\delta\cap B(x,\varepsilon_x))
  \end{align}
  using localization arguments. 
  \begin{itemize}
  \item If $x\in W_\delta$ we consider a ball $O_x=B(x,\varepsilon_x)$
    with $\varepsilon_x<\mathrm{dist}(x,\partial\Omega)$. The solution
    is harmonic in $O_x$ and hence even real analytic in $O_x$
    \cite[Theorem 1.7.1]{GLC}.
  \item For $x\in\Gamma_D\cap\bar W_\delta$ or $x\in\Gamma_N\cap\bar
    W_\delta$ we consider a neighborhood
    $O_x=B(x,\varepsilon_x)\cap\Omega$ with
    $\varepsilon_x<\mathrm{dist}(x,C)$.  Again, since the solution is
    harmonic in $O_x$ it is real analytic in $O_x$, \cite[Theorem
    2.7.1]{GLC}, i.\,e. near the smooth part of the Dirichlet or
    Neumann boundary.
  \item For the remaining case $x\in \Gamma_S\cap \bar W_\delta$ we
    fix a rotationally symmetric cut-off function $\eta\in
    \mathcal{D}(\R^2)$ such that $\eta=1$ in a neighborhood of $x$
    with a small support such that $\mathrm{supp}\,\eta\cap
    \Omega\subset W_{\delta/2}$, see the illustration in Figure
    \ref{fig:supp}.
    \begin{figure}
      \begin{center}
        \includegraphics[scale=1]{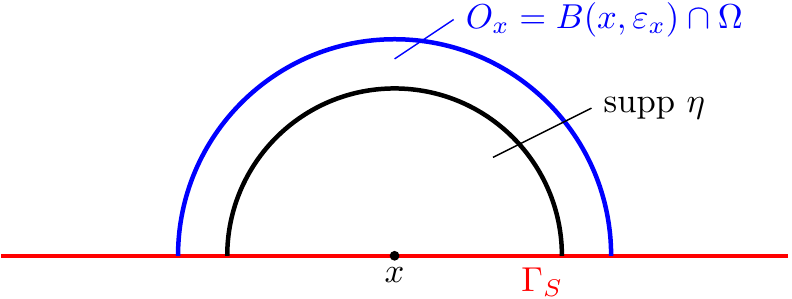}
      \end{center}
      \caption{\label{fig:supp}Illustration of $\mathrm{supp}\,\eta$
        and $O_x$.}
    \end{figure}
    Let now $O_x=B(x,\varepsilon_x)\cap\Omega$ with appropriately
    chosen $\varepsilon_x$ be a convex domain containing the support
    of $\eta$.  Then $v=\eta w-\eta^2 y+y$ with arbitrary \[w\in
    K_x:=\{z\in H^1(O_x): z\geq 0\ \text{on } \partial O_x\}\]
    satisfies
      $v=y=0$ on $\Gamma_D$ and
      $v=\eta w+(1-\eta^2)y\ge0$ on $\Gamma_S$
    since all factors are greater than or equal to zero; hence $v\in
    K$. Inserting $v$ into \eqref{eq:VFsignorini} gives
    \[
      \int_\Omega\nabla y\cdot\nabla(\eta(w-\eta y)) \ge
      \int_{\Gamma_U}u\eta(w-\eta y),
    \]
    and with $\eta\equiv0$ in $\Omega\setminus O_x$ we get
    \begin{align}\label{eq:newstar}
      \int_{O_x}\nabla y\cdot\nabla(\eta(w-\eta y)) \ge0
      \quad\forall x\in K_x.
    \end{align}
    Since $\partial_n\eta=0$ on $\partial O_x$ we get
    \begin{align*}
      \int_{O_x} &\nabla(\eta y)\cdot\nabla(w-\eta y) =
      \int_{O_x} (y\nabla\eta+\eta\nabla y)\cdot\nabla(w-\eta y) \\
      &=-\int_{O_x} \nabla\cdot(y\nabla\eta)\cdot\nabla(w-\eta y) +
      \int_{O_x} \eta\nabla y\cdot\nabla(w-\eta y) \\
      &=-\int_{O_x} \nabla\cdot(y\nabla\eta)(w-\eta y) +
      \int_{O_x} \nabla y\cdot\big(\nabla(\eta(w-\eta y))-\nabla\eta(w-\eta y)\big) \\
      &\ge-\int_{O_x} \left(\nabla\cdot(y\nabla\eta)+\nabla y\cdot\nabla\eta\right)(w-\eta y)
    \end{align*}
    due to \eqref{eq:newstar}. Hence $\eta y\in K_x$ can be seen as
    the unique solution of
    \begin{align*} 
      \int_{O_x} (\nabla (\eta y)\cdot \nabla (w-\eta y)+\eta y(w-\eta y))\geq 
      \int_{O_x} g_x (w-\eta y)\quad \forall w\in K_x
    \end{align*}
    with $g_x:=-\nabla\cdot(y\nabla\eta)-\nabla y\cdot\nabla\eta +\eta y \in
    L^2(O_x)$. Grisvard and Iooss showed that $\eta y\in H^2(O_x)$,
    see \cite[Corollary 3.2]{Grisvard-Iooss:76}.
  \end{itemize}
  Altogether the property \eqref{eq:localreg} is proved.

  The balls $B(x,\varepsilon_x)$ form an open covering of $\bar
  W_\delta$, hence there exists a finite covering, i.\,e.,
  \begin{align*}
    \exists\,x_j, \ j=1,\ldots,J: \quad
    \bar W_\delta\subset \bigcup_{j=1}^J B(x_j,\varepsilon_{x_j}).
  \end{align*}
  We conclude that 
  \[
    y\in H^2(W_\delta)\subset W^{1,p}(W_\delta)\quad\forall p\in [1,\infty).
  \]

  With the same procedure as above we can now prove that $y\in
  C^{1,\lambda}(W_\delta\cap B(x,\varepsilon_x))$ for all $x \in \bar
  W_{2\delta}$. The point is that now $g_x\in L^p(O_x)$ for all
  $p<\infty$ such that we can use a theorem from Khodja and Moussaoui,
  \cite[Theorem 2]{MoussaouiKhodja1992} (see also
  \cite{Uraltseva:87}), to deduce that $\eta_x y$ belongs to
  $C^{1,\lambda}(O_x)$ for all $\lambda\in(0,\frac12)$. As above we
  conclude $y\in C^{1,\lambda}(W_{2\delta})$. Since $\delta>0$ was
  arbitrary we are done.
\end{proof}

The following lemma is inspired from \cite[\S 6]{kinderlehrer:81Pisa},
see also \cite[Lemma III.1.3]{MoussaouiKhodja1992}.

\begin{lemma}\label{l:zero}
  Denote by $\partial_ty$ and $\partial_ny$ the tangential and normal
  derivatives along the boundary. Then the equality
  \begin{align}
    \partial_ty \, \partial_ny =0 \ \text{on } \Gamma_S\setminus C.
    \label{eq:prod=0}
  \end{align}
  holds.
\end{lemma}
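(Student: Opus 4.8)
The plan is to exploit the pointwise Signorini complementarity condition $y\,\partial_n y = 0$ on $\Gamma_S$ together with the regularity just established in Lemma \ref{t:regy}. Away from the critical set $C$, the solution $y$ is $C^{1,\lambda}$ up to the Signorini boundary, so both $\partial_t y$ and $\partial_n y$ are continuous functions there, and the products in \eqref{eq:prod=0} make classical pointwise sense. The key observation is that on $\Gamma_S$ the trace of $y$ is a one-dimensional function of the arclength parameter $t$, and $\partial_t y$ is precisely the derivative of this trace along the boundary.

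The main step is a case analysis at a fixed point $x_0\in\Gamma_S\setminus C$. Either $y(x_0)>0$ or $y(x_0)=0$. In the first case, by continuity $y>0$ in a boundary neighborhood of $x_0$, so the complementarity condition forces $\partial_n y=0$ there, and the product vanishes. In the second case $x_0\in\Gamma_C$, and here I would argue that the trace $y$ restricted to $\Gamma_S$ attains a local minimum value ($=0$) at $x_0$ because $y\geq0$ on all of $\Gamma_S$. Since the trace is $C^1$ in $t$, a standard first-order condition at an interior minimizer of a differentiable function gives $\partial_t y(x_0)=0$. Thus at every point of $\Gamma_S\setminus C$, at least one of the two factors vanishes, which is exactly \eqref{eq:prod=0}.

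I expect the main obstacle to be making the minimum argument fully rigorous where $\Gamma_C$ need not be an interval: a zero of $y$ on $\Gamma_S$ is a minimizer of the trace over the \emph{one-sided constraint} $y\geq0$, so one must confirm that it is genuinely an interior (not endpoint) minimizer along the boundary curve, so that the unconstrained stationarity $\partial_t y=0$ applies rather than a mere inequality. Points of $\Gamma_S$ that are boundary points of $\Gamma_C$ relative to $\Gamma_S$ require care, but since every such point still lies in the open arc $\Gamma_S\setminus C$ and has $y\geq0$ on both sides along the tangential direction, the value $y=0$ is still a local minimum of the $C^1$ trace in $t$, forcing $\partial_t y=0$. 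The only truly excluded situation — transition points and corners — is exactly the set $C$ that we have removed.

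An alternative, perhaps cleaner formulation avoids splitting into cases by working on the open set $\{y>0\}\cap\Gamma_S$ and its complement in $\Gamma_S\setminus C$: on the former $\partial_n y=0$, and on the latter (where $y\equiv0$ along a relatively closed subset) the tangential derivative vanishes because $y$ is constant along those boundary arcs, while at isolated zeros the minimizer argument again gives $\partial_t y=0$. Either way the conclusion \eqref{eq:prod=0} holds pointwise, and since both derivatives are $L^2$ (indeed continuous) on $\Gamma_S\setminus C$, the identity also holds in the appropriate function-space sense needed later.
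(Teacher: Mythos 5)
Your proof is correct, and it is close in spirit to the paper's but organized around the dual case split. The paper fixes $\varrho>0$, works on the compact set $\Gamma_{S,\varrho}=\Gamma_S\setminus\bigcup_i B(c_i,\varrho)$, and splits according to the sign of $\partial_n y$: where $\partial_n y=0$ the product vanishes trivially, and where $\partial_n y>0$ the continuity of $\partial_n y$ (from Lemma \ref{t:regy}) makes $\{\partial_n y>0\}$ relatively open, so the complementarity forces $y\equiv 0$ on a whole boundary neighborhood and hence $\partial_t y=0$ there. You instead split on the sign of $y$: where $y>0$ the complementarity gives $\partial_n y=0$ directly, and where $y=0$ you invoke the first-order condition at an interior minimizer of the $C^1$ trace (using $y\ge 0$ on $\Gamma_S$) to get $\partial_t y=0$. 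Your handling of the second case is the more robust of the two at the pointwise level: it covers isolated zeros of $y$ without needing $y$ to vanish on a neighborhood, whereas the paper's openness argument only directly treats points where $\partial_n y>0$ and silently leaves the points with $y(x)=0$ and $\partial_n y(x)=0$ to the trivial first case. Both arguments rest on the same two inputs, namely the $C^{1,\lambda}$ regularity up to $\Gamma_S\setminus C$ from Lemma \ref{t:regy} (which upgrades the a.e.\ complementarity to a pointwise identity for continuous functions) and the fact that every point of $\Gamma_S\setminus C$ is interior to the boundary arc on which the constraint $y\ge 0$ holds; you correctly identify and address the latter point. Your concluding remark that the identity then also holds in the relevant function-space sense matches the role the lemma plays later in the paper.
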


\begin{remark}\label{rem:zero}
This result extends even to $\Gamma_D$ and $\Gamma_N$ since
$\partial_ty=0$ on $\Gamma_D$ and $\partial_ny=0$ a.e. on $\Gamma_N$.
\end{remark}

\begin{proof}
  Introduce the compact set
  \[
  \Gamma_{S,\varrho}:=\Gamma_S\setminus\bigcup_{i=1}^nB(c_i,\varrho)
  \]
  for some $\varrho>0$.
  Then according to Theorem \ref{t:regy}, $\partial_n y$ is continuous on
  $\Gamma_{S,\varrho}$, hence we can introduce the sets
  \begin{align*}
    \Gamma_{S,\varrho}^+&=\{x\in \Gamma_{S,\varrho}\ | \ \partial_n
    y(x)>0\},& \Gamma_{S,\varrho}^0&=\{x\in \Gamma_{S,\varrho}\ | \
    \partial_n y(x)=0\},
  \end{align*}
  and notice that
$
  \Gamma_{S,\varrho}=\Gamma_{S,\varrho}^+\cup \Gamma_{S,\varrho}^0.
$ 
  At this stage, we distinguish two cases: 
  \begin{enumerate}
  \item If $x\in
  \Gamma_{S,\delta}^0$, we have $\partial_n y(x)=0$ and hence
  \begin{align}
    {\partial_t}y (x) \, \partial_ny(x) =0.
    \label{eq:prod=0ponctual}
  \end{align}
  \item In the other case, $x\in \Gamma_{S,\delta}^+$, we have $y(x)=0$
  due to the Signorini conditions. Observe that the continuity of
  $\partial_n y$ implies that $\Gamma_{S,\varrho}^+$ is an open subset
  of $\Gamma_{S,\varrho}$. Hence, if $x\in \Gamma_{S,\delta}^+$, then
  $y=0$ holds in a neighborhood of $x$, and the tangential derivative
  is also zero in this neighborhood and consequently $\partial_ty
  (x)=0$, which shows that \eqref{eq:prod=0ponctual} also holds in
  that case.
  \end{enumerate}
  We have just shown that \eqref{eq:prod=0ponctual} is valid for all
  $x\in \Gamma_{S,\varrho}$ and letting $\varrho$ tend to zero we find
  \eqref{eq:prod=0}.
\end{proof}

We prove now the main result of this section, namely the
characterization of the coincidence set $\Gamma_C$, see
\eqref{def:GammaC}.
For that purpose, we adapt the method
of Kinderlehrer in \cite[\S 6]{kinderlehrer:81Pisa} who treated the
case of the elasticity system.

\begin{theorem}\label{thm:coincideset1}
  Let $y\in K$ be the unique solution of \eqref{eq:VFsignorini},
  then the coincidence set $\Gamma_C$ is the union of a finite
  numbers of intervals and finitely many isolated points.
\end{theorem}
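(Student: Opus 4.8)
The plan is to reduce the statement to showing that the set $T=\partial_{\Gamma_S}\Gamma_C$ of \emph{transition points} — the relative boundary in $\Gamma_S$ of the coincidence set, i.e.\ the points where $y=0$ gives way to $\partial_n y=0$ — is finite. Indeed, if $T$ is finite then $\Gamma_S\setminus T$ consists of finitely many relatively open arcs, and on each such arc $y$ is either identically zero or strictly positive: the arc is connected and excludes $T$, so the relatively clopen decomposition into $\{y=0\}$ and $\{y>0\}$ must be trivial. Hence $\Gamma_C$ is the union of finitely many closed subintervals together with those finitely many points of $T$ at which $y=0$ while $y>0$ on both sides, which are precisely the isolated points. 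Since $\bar\Gamma_S$ is compact, finiteness of $T$ follows once I exclude accumulation of $T$ at any $p\in\bar\Gamma_S$, and for this I distinguish $p\in\Gamma_S\setminus C$ from $p\in C$ (recall $\bar\Gamma_S\subset(\Gamma_S\setminus C)\cup C$).

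The analytic engine is as follows. Straightening $\Gamma_S$ locally so that $\Omega$ lies in the upper half plane, I set $g=\partial_{x_1}y-\mathrm{i}\,\partial_{x_2}y$, which is holomorphic because $y$ is harmonic; on the boundary segment one has $\mathrm{Re}\,g=\partial_t y$ and $\mathrm{Im}\,g=\partial_n y$, so Lemma~\ref{l:zero}, i.e.\ \eqref{eq:prod=0}, yields $\mathrm{Im}(g^2)=0$ there. As $y\in C^{1,\lambda}$ up to $\Gamma_S\setminus C$ by Lemma~\ref{t:regy}, $g$ is continuous up to the boundary, and the Schwarz reflection principle extends $g^2$ to a function $G$ holomorphic across the segment. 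A transition point is a common zero of $\partial_t y$ and $\partial_n y$: approaching it from the contact side forces $\partial_t y=0$ and from the non-contact side forces $\partial_n y=0$, so by continuity both vanish and $G$ vanishes there. Unless $G\equiv0$ — which forces $\nabla y\equiv0$, i.e.\ $y$ locally constant, a case disposed of directly — the zeros of $G$ are isolated, so $T$ cannot accumulate at any $p\in\Gamma_S\setminus C$.

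For $p=c_i\in C$ the boundary need not be flat, and here I would use the conformal map $\Phi\colon z\mapsto z^{\pi/\alpha_i}$ unfolding the sector of opening $\alpha_i$ at $c_i$ onto a half-disk. Harmonicity is preserved, and since $\Phi$ is conformal the tangential and normal derivatives are both multiplied by the positive factor $|\Phi'|$; consequently the signs of $y$ and of $\partial_n y$ and the product relation are preserved, so the transformed solution $\tilde y$ satisfies Signorini conditions on the images of the Signorini edges, and either Dirichlet ($y=0$, whence $\mathrm{Re}\,\tilde g=0$) or Neumann ($\partial_n y=0$, whence $\mathrm{Im}\,\tilde g=0$) on the image of any adjacent $\Gamma_D$ or $\Gamma_N$ part — the control boundary being excluded here by $\bar\Gamma_S\cap\bar\Gamma_U=\emptyset$. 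In every admissible case $\mathrm{Im}(\tilde g^2)=0$ on both halves of the real axis, so Schwarz reflection across $\R\setminus\{0\}$ produces $\tilde G=\tilde g^2$ holomorphic on a punctured disk.

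The crux is to control $\tilde G$ at the puncture. Conformal invariance of the Dirichlet integral gives $\nabla\tilde y\in L^2$ near $0$ from $y\in H^1$, hence $\tilde g\in L^2$ and $\tilde G\in L^1$ near $0$; estimating the Laurent coefficients $b_{-k}=\frac{1}{2\pi\mathrm{i}\,}\oint_{|\zeta|=r}\tilde G\,\zeta^{k-1}\,d\zeta$ against $\|\tilde G\|_{L^1}$ shows $b_{-k}=0$ for $k\ge2$, so $\tilde G$ extends meromorphically with at most a simple pole at $0$. Its zeros — which again contain the transition points on the Signorini side — are therefore isolated, so $T$ cannot accumulate at $c_i$ either. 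Combining the two cases with compactness of $\bar\Gamma_S$ shows that $T$ is finite, proving the theorem. I expect the corner analysis of these last two paragraphs to be the main obstacle: both the verification that $\Phi$ preserves the entire Signorini structure and the integrability argument pinning down the singularity of $\tilde G$ are where the real work lies, and it is precisely this step that removes Kinderlehrer's flatness assumption.
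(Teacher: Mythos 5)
Your proposal is correct and follows essentially the same route as the paper: the identity $\Im(w^2)=0$ on the Signorini boundary from Lemma \ref{l:zero} (and its extension to adjacent Dirichlet/Neumann legs), Schwarz reflection, the conformal unfolding $z\mapsto z^{\pi/\alpha_i}$ at corners, and the $L^1$-Laurent argument forcing at most a simple pole are exactly the paper's tools. The only (harmless) difference is the endgame: the paper reads off the structure of $\Gamma_C$ from the sign of $zF(z)$ on the finitely many intervals between its real zeros, whereas you identify transition points with zeros of $G$ and conclude by a connectedness argument; note only that $\partial_t y=0$ at a contact point is best justified by $y\ge 0$ attaining its minimum there, which also covers isolated contact points where there is no ``contact side'' to approach from.
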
 

\begin{proof}
  We localize the problem by considering a finite covering of
  $\Gamma_S$.
  Introduce a finite number of open balls $B(c_i,\varrho_i)$, $i\in
  J$. The index set $J$ is chosen such that $J\supset \{i\in C :
  x_i\in \bar\Gamma_S\}$ and the radii $\varrho_i>0$ are chosen such
  that $\Gamma_S\subset\bigcup_{i\in J}B(c_i,\varrho_i)$ and
  $c_j\not\in B(c_i,\varrho_i)$ for $i\ne j$, see Figure
  \ref{fig:domain}. Note that the index set may contain further points
  $c_i\in\Gamma_S\setminus C$.
  \begin{figure}
    \begin{center}
      \includegraphics[scale=1]{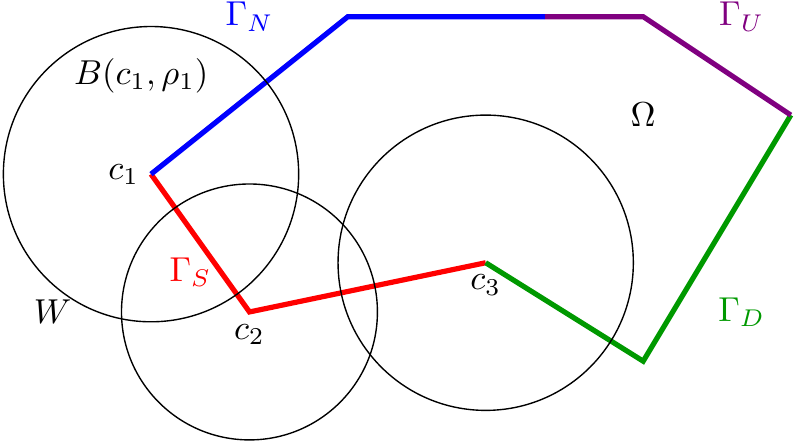}
    \end{center}
    \caption{\label{fig:domain}Illustration of the domain, the
      boundary conditions and the covering.}
  \end{figure}

  We consider now any ball $B(c_i,\varrho_i)$  and
  omit the index $i$ for better readability.  Introduce a local polar
  coordinate system $(r,\theta)$ centered in $c$, such that
  \[
    O_\alpha:=B(c,\varrho)\cap\Omega =
    \{(r\cos\theta,r\sin\theta)\in\mathbb{R}^2:
    0<r<\varrho,0<\theta<\alpha\} 
  \] 
  where $\alpha$ is the angle of the domain at $c$.
  Consider now the situation where
  \[
    \Gamma_0:=\{(r\cos\theta,r\sin\theta)\in\mathbb{R}^2:0<r<\varrho,\theta=0\}
    \subset\Gamma_S.
  \]
  The other leg
  $\Gamma_\alpha:=\{(r\cos\theta,r\sin\theta)\in\mathbb{R}^2:
  0<r<\varrho, \theta=\alpha\}$ may be contained in $\Gamma_D$,
  $\Gamma_N$ or $\Gamma_S$ but not in $\Gamma_U$ because of
  $\bar\Gamma_S\cap\bar\Gamma_U=\emptyset$, see \eqref{boundaryparts}.
  Note that the situation where $\Gamma_\alpha\subset\Gamma_S$ and
  $\Gamma_0\not\subset\Gamma_S$ can be treated in a similar way.

  The function $y$ satisfies
  \begin{align*}
    -\Delta y &=0 \quad\text{in } O_\alpha, \\
    y\geq 0,\ \partial_n y\geq 0, \ y \partial_n
    y&=0 \quad\text{on }\Gamma_S\cap B(c,\varrho).
  \end{align*}

  Now regarding $y$ as a function of the complex variable
  $z=x_1+\imath x_2$, we define the function
  \[
    w(z)=\partial_2 y (z)+\imath \partial_1y(z),
  \]
  defined in $O_\alpha$ (now considered as a subset of $\mathbb{C}$), see the
  illustration in Figure \ref{fig:trafo}.
  \begin{figure}
    \begin{center}
      \includegraphics[scale=1]{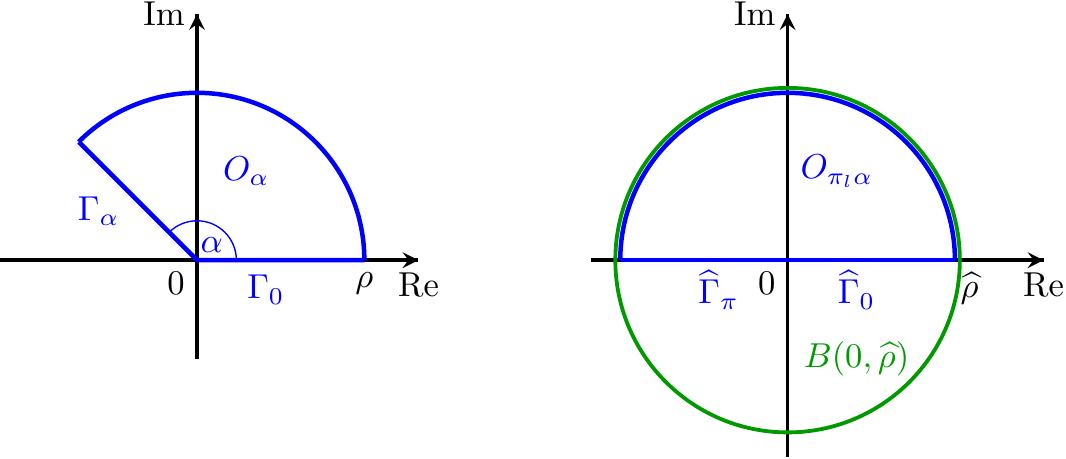}
    \end{center}
    \caption{\label{fig:trafo}Illustration of transformation.}
  \end{figure}
  As $y$ is harmonic in~$O_\alpha$ and belongs to
  $C^2(O_\alpha)$, the function $w$ is analytic in $O_\alpha$,
  \cite[p. 41]{conway}.  Furthermore, we introduce the biconformal
  mapping
  \[
    h: z\mapsto\hat z= z^{\pi/\alpha}, \quad O_\alpha\to O_{\pi,\alpha}:=
    \{z=r\mathrm{e}^{\imath\theta}\in\mathbb{C}:
    0<r<\varrho^{\pi/\alpha}, 0<\theta<\pi\},
  \]
  and denote $\hat\Gamma_0:=h(\Gamma_0)$ and
  $\hat\Gamma_\pi:=h(\Gamma_\alpha)$. Note the simple rule
  \[
    h:r\mathrm{e}^{\imath\theta}\mapsto\hat
    r\mathrm{e}^{\imath\hat\theta}\quad\text{with } \hat
  r=r^{\pi/\alpha} \text{ and } \hat\theta=\frac{\theta\pi}{\alpha}.
  \]

  Let us analyze now the function
  \[
    \hat y(\hat z):=y(z).
  \]
  Since a conformal mapping preserves the Laplace operator (up to a
  factor) and since the normal derivative is up to a factor again
  the $\theta$-derivative we get
  \begin{align*}
    -\Delta\hat y &=0 \quad\text{in }O_{\pi,\alpha}, \\
    \hat y\ge0, \quad \hat\partial_n\hat y\ge0,\quad
    \hat y \hat\partial_n\hat y&=0 \quad\text{on }\hat\Gamma_0 ,
  \end{align*}
  and the appropriate Dirichlet, Neumann or Signorini boundary
  condition on $\hat\Gamma_\pi$. Moreover, we can compute
  \begin{align*}
    \partial_r y&=
    \partial_r\hat r\,\partial_{\hat r}\hat y=
    \tfrac\pi\alpha r^{\pi/\alpha-1} \partial_{\hat r}\hat y , &
    \partial_\theta y&=
    \partial_\theta\hat\theta\,\partial_{\hat\theta}\hat y=
    \tfrac\pi\alpha\partial_{\hat\theta}\hat y,
  \end{align*}
  such that
  \begin{align*}
    \int_{O_{\pi,\alpha}} |\hat\nabla\hat y|^2 &=
    \int_{O_{\pi,\alpha}} \left( |\partial_{\hat r}\hat y|^2+
    |\hat r^{-1}\partial_{\hat\theta}\hat y|^2
    \right) \,\hat r\mathrm{d}\hat r\mathrm{d}\hat\theta \\ &=
    \int_{O_\alpha} \left( 
    (\tfrac\pi\alpha)^{-2} r^{2(1-\pi/\alpha)} |\partial_r y|^2 +
    r^{-2\pi/\alpha}(\tfrac\pi\alpha)^{-2}|\partial_\theta y|^2 
    \right) \,r^{\pi/\alpha}\,
    \tfrac\pi\alpha r^{\pi/\alpha-1}\mathrm{d}r\,
    \tfrac\pi\alpha\mathrm{d}\theta\\ &=
    \int_{O_\alpha} \left( 
    |\partial_r y|^2 + |r^{-1}\partial_\theta y|^2 
    \right) \,r\mathrm{d}r\mathrm{d}\theta =
    \int_{O_\alpha} |\nabla y|^2,
  \end{align*}
  i.\,e., for the function
  \[
    \hat w(\hat z):=\partial_1\hat y+\imath\partial_2\hat y
  \]
  the relation
  \begin{align*}
    |\hat w(\hat z)^2|&=|\hat w(\hat z)|^2=
    |\hat\nabla\hat y|^2\in L^1(O_{\pi,\alpha})
  \end{align*}
  holds.

  Lemma \ref{l:zero} and Remark \ref{rem:zero} imply that
  \[
    \Im \left(w(z)\right)^2=0 \hbox{ on } 
    (-\hat\varrho,\hat\varrho)\setminus\{0\},
  \]
  with $\hat\varrho=\varrho^{\pi/\alpha}$.  Consequently on
  $U:=B(0,\hat\varrho)\setminus\{0\}$, we define the function
  \[
  F(z)=
  \begin{cases}
    w(z)^2 &\hbox{ if } \Im z\leq 0,\\[0.5ex]
    \overline{w(\bar z)^2} &\hbox{ if } \Im z> 0,
  \end{cases}
  \]
  which is analytic in $U$ by the Schwarz reflection principle, see
  \cite[\S IX.1]{conway}. Hence $F$ is meromorphic in $U$. As
  additionally $F$ belongs to $L^1(U)$ we conclude that $F$ admits the
  Laurent expansion
  \[
  F(z)=\frac{c}{z}+F_H(z),
  \]
  with $c\in \mathbb{C}$ and a function $F_H$ which is an analytic in
  $B(0,\hat\varrho)$. (Terms $z^{-j}$ with $j>1$ are not in $L^1(U)$.)
  This implies that the function
  \[
  \Phi(z)=z F(z)
  \]
  is holomorphic on $B(0,\hat\varrho)$. Therefore $\Phi$ has a finite number of
  zeroes on $\hat\Gamma:=\{z\in\bar U:\Im z=0\}$ if $\Phi$ is not
  identically equal to 0.

  Let us analyze two cases:
  \begin{enumerate}
  \item If $\Phi$ is identically equal to 0, then by
    \eqref{eq:prod=0} we get
    \[
    w(z)^2=(\partial_2y)^2-(\partial_1 y)^2= 0 \hbox{ on }
    \hat\Gamma\setminus\{0\},
    \]
    which again by \eqref{eq:prod=0} implies
    \[
    \partial_2y=\partial_1 y=0 \hbox{ on } \hat\Gamma\setminus\{0\}.
    \]
    Consequently $y$ is constant on $\hat\Gamma$, so either this
    constant is zero and $\hat\Gamma_C:=\{z\in\hat\Gamma:\hat y(z)=0\}
    = \hat\Gamma$, or this constant is different from zero and
    $\hat\Gamma_C= \emptyset$.
    
  \item If $\Phi$ is not identically equal to 0, the sets $\{z\in
    \hat\Gamma: \Phi(z)>0\}$ and $\{z\in \hat\Gamma: \Phi(z)<0\}$
    are unions of a finite number of intervals $I$.  We are
    looking for the behavior of $y$ on any of these intervals $I$.
    Depending on the sign of $z\in\hat\Gamma$ we find that
    $\Phi(z)=zF(z)$ is positive or negative in $I$, hence $(\partial_2
    y)^2-(\partial_1 y)^2$ does not change sign in $I$, and
    moreover $(\partial_2 y)^2>0$ or $(\partial_1 y)^2>0$ in
    $I$. If $(\partial_2 y(z))^2>0$ then we get by the Signorini
    condition that $y\equiv 0$ in $I$. If $(\partial_1 y(z))^2>0$ then
    the function $y$ is nowhere constant in $I$, hence $y$ has no or a finite number
    of zeros in $I$, and we get by the Signorini condition that
    $\partial_n y=0$ a.e. in $I$.
  \end{enumerate}
  In conclusion, in this case $\hat\Gamma_C$ is the union of a finite
  number of intervals plus eventually a finite number of points. Since
  the mapping $h$ is continuous this result is valid also for~$\Gamma_C$.
\end{proof}

\begin{remark}\label{rem:f}
  Let us finish this section with a discussion of our assumption that
  we assumed a homogeneous differential equation in \eqref{eq:bvp} and
  a homogeneous gap function in \eqref{eq:signorinibc}.
  \begin{itemize}
  \item The assumption of a homogeneous differential equation in
    \eqref{eq:bvp} was made to simplify the discussion. For Lemma
    \ref{t:regy} a right hand side $f\in L^\infty(\Omega)$ could be
    admitted. Recall also the introduction of the domain $W$ in
    \eqref{def:W}. The whole analysis is untouched if the equation is
    homogeneous in a neighborhood of $\Gamma_S$ only since then the
    set $W$ could be defined accordingly.
  \item In particular cases the solution of non-homogeneous problem
    could be homogenized. Assume that the differential equation in
    \eqref{eq:bvp} is replaced by $-\Delta y=f$ and the gap condition
    in \eqref{eq:signorinibc} is replaced by $y\ge\psi$. If $f$ and
    $\psi$ are such that there exists a function $y_{f,\psi}$ such that
    \begin{align*}
      -\Delta y_{f,\psi} &= f \quad\text{in }\Omega, \\
      y_{f,\psi} &= 0 \quad\text{on }\Gamma_D,\\
      \partial_n y_{f,\psi}&=0 \quad\text{on }\Gamma_N\cup\Gamma_U,\\
      y_{f,\psi}=\psi,\ \partial_n y_{f,\psi}&=0 \quad\text{on }\Gamma_S,
    \end{align*}
    then $y-y_{f,\psi}$ satisfies our assumptions. Of course this
    problem is overdetermined such that the existence of a solution
    cannot be expected for any $f$ and $\psi$. But examples can be
    constructed by choosing a function
    \[
      y_*\in\{v\in H^2(\Omega):v=0 \text{ on }\Gamma_D, 
      \partial_n v=0 \text{ on }\Gamma_N\cup\Gamma_U\cup\Gamma_S\}
      \supset H^2_0(\Omega)
    \]
    and defining $f=-\Delta y_*$ and $\psi=y_*|_{\Gamma_S}$.
  \end{itemize}
\end{remark}

\begin{example}
 Christof and Haubner investigated in \cite{ChristofHaubner18} a
  square domain and the case $\Gamma=\Gamma_S$. In the case of a
  homogeneous differential equation, Condition (A) in this paper is
  now proven in Theorem \ref{thm:coincideset1}, namely that the 
  relative boundary of $\Gamma_C$ has one-dimensional Hausdorff
  measure zero and the relative interior of $\Gamma_C$ consists of at
  most finitely many connected components.
\end{example}

\section{\label{sec:reg}Regularity of the solution}

We formulate now a regularity result in the spirit of Theorem 2.3 of the paper
\cite{ChristofHaubner18} by Christof and Haubner where the regular part of the
solution is considered in $W^{2,p}$, $p>2$, $p\not=4$. But we like to
note that the regular part could also be smoother; the prize is that
possibly more singular terms have to be included and the datum $u$ at
the Neumann boundary must be sufficiently regular.

\begin{theorem}\label{t:reg}
  Let $y$ be the solution of problem
  \eqref{eq:bvp}--\eqref{eq:signorinibc}. Recall the set
  $\{c_i\}_{i=1}^n$ of critical points and the interior angles
  $\alpha_i$. Recall also that there are points
  $\{c_i\}_{i=n+1}^m\subset\Gamma_S\setminus C$ of unknown location
  which are the endpoints of the intervals in the coincidence set and in that case, set $\alpha_i=\pi$.
  Furthermore denote by $(r_i,\theta_i)$ local polar coordinates at
  all these points.  

  Let $p>2$, $p\not\in P$, where the finite set $P$ of exceptional
  values is a subset of the countable set 
  \[
    \left\{ \frac{2}{2-\frac{k\pi}{2\alpha_i}},
    \ k\in\mathbb{N},\ i=1,\ldots,m\right\}.
  \]
  \pagebreak[3]

  \noindent Assume that $u\in W^{1-1/p,p}(\Gamma_U)$ satisfies the compatibility
  condition $u(c_i)=0$ 
  \begin{itemize}
  \item if $c_i\in \bar\Gamma_D\cap\bar\Gamma_U$ and
    $\alpha_i=\frac12\pi$ or $\alpha_i=\frac32\pi$ or
  \item if $c_i\in \bar\Gamma_N\cap\bar\Gamma_U$ and $\alpha_i=\pi$. 
  \end{itemize}
  Then there is a representation of $y$
  \begin{align*}
    y=y_R+\sum_{i=1}^n\sum_{j: 0<\lambda_{i,j}<2-2/p\atop \lambda_{i,j}\ne 1} d_{i,j}
    r_i^{\lambda_{i,j}} \Phi_{i,j}(\theta_i) +
    \sum_{i=n+1}^m d_i r_i^{3/2}\Phi_{i}(\theta_i)
  \end{align*}
  with $y_R\in W^{2,p}(\Omega)$, coefficients $d_{i,j}$ and $d_i$, smooth
  functions $\Phi_{i,j}$ and $\Phi_{i}$, and exponents
  \[
    \lambda_{i,j}=
    \begin{cases}
      j\pi/\alpha_i & \text{if D-D or N-N or U-U or U-N conditions near }c_i,\quad j\ge1, \\
      (j-\frac12)\pi/\alpha_i  & \text{if D-N or D-U conditions near }c_i,\quad j\ge1, \\
      j\pi/(2\alpha_i)  & \text{if S-S conditions near }c_i,\quad j\ge2, \\
      j\pi/(2\alpha_i)  & \text{if S-D or S-N conditions near }c_i,\quad j\ge1,
    \end{cases}
  \]
  where D-N means that one boundary edge at $c_i$ is contained in
  $\Gamma_D$ and the other in $\Gamma_N$, and so on.
\end{theorem}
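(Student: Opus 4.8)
The plan is to use the structure of the coincidence set established in Theorem~\ref{thm:coincideset1} to reduce the Signorini problem, in a neighborhood of each critical point, to a \emph{linear} mixed boundary value problem for the Laplacian, and then to combine the classical corner-singularity expansion (Kondrat'ev, Grisvard) with the Signorini sign conditions, which is precisely what prunes the list of admissible exponents.

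First I would localize. By Theorem~\ref{thm:coincideset1} the set $\Gamma_C$ is a finite union of closed intervals together with finitely many isolated points; its endpoints lying in the relative interior of $\Gamma_S$ are exactly the points $c_{n+1},\dots,c_m$. An isolated point of $\Gamma_C$ is an interior zero of $y$ surrounded on $\Gamma_S$ by $\partial_n y=0$ (by complementarity and the continuity of $\partial_n y$ from Lemma~\ref{t:regy}), hence $y$ is real analytic there by reflection and such points carry no singularity. I would then fix $\delta>0$ so small that the balls $B(c_i,\delta)$, $i=1,\dots,m$, are pairwise disjoint and meet only one special point each, and so that on every boundary leg inside $B(c_i,\delta)$ the \emph{type} of condition is constant: genuinely $D$, $N$ or $U$, or---on a Signorini leg---either $y=0$ (a Dirichlet leg) or $\partial_n y=0$ (a Neumann leg), according to whether that leg lies in $\Gamma_C$ or in $\Gamma_S\setminus\Gamma_C$. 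Away from $\bigcup_i B(c_i,\delta)$ the function $y$ then solves a linear problem with smooth, locally constant boundary conditions, so interior and boundary elliptic regularity (as in Lemma~\ref{t:regy}) yield $y\in W^{2,p}$ there, a contribution absorbed into $y_R$.

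Next, near each $c_i$ I would invoke the shift theorem of the Kondrat'ev--Grisvard theory for the Laplacian on a plane sector of opening $\alpha_i$ with the two constant-type leg conditions: the solution expands into finitely many singular functions $r_i^{\lambda_{i,j}}\Phi_{i,j}(\theta_i)$ with $0<\lambda_{i,j}<2-2/p$ (the $W^{2,p}$ threshold) plus a $W^{2,p}$ remainder, the exponents being the eigenvalues of the associated pencil on the arc $(0,\alpha_i)$. Matching $\Phi(\theta)=\sin(\lambda\theta)$ or $\cos(\lambda\theta)$ to the leg conditions gives $\lambda=j\pi/\alpha_i$ in the symmetric cases (D-D, N-N, and---since $U$ acts like $N$ for the operator---U-U, U-N) and $\lambda=(j-\tfrac12)\pi/\alpha_i$ in the mixed cases (D-N, D-U). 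A term with $\lambda_{i,j}=1$ is linear, $r_i\Phi_{i,j}(\theta_i)=A x_1+B x_2$, hence smooth and absorbed into $y_R$, which explains the exclusion $\lambda_{i,j}\neq1$; the exceptional set $P$ consists of those $p$ for which the threshold $2-2/p$ coincides with some eigenvalue $\lambda_{i,j}$, where the shift theorem degenerates. The compatibility conditions $u(c_i)=0$ are imposed exactly to cancel the low-order singularity that a nonzero Neumann datum would otherwise generate at the listed $D$-$U$ and $N$-$U$ configurations, so that the particular solution lifting $u$ stays in $W^{2,p}$.

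The crux, and the step I expect to be hardest, is the pruning on Signorini legs through the sign conditions $y\ge0$, $\partial_n y\ge0$. At an $S$-$D$ or $S$-$N$ corner the Signorini leg reduces, after the localization, to a single Dirichlet or Neumann leg, so the exponents are the union of the $j\pi/\alpha_i$ family (coincidence leg) and the $(j-\tfrac12)\pi/\alpha_i$ family (non-coincidence leg), i.e.\ $j\pi/(2\alpha_i)$, $j\ge1$; here the relevant sign condition lives on only one leg and merely restricts the sign of the coefficient $d_{i,j}$, so the leading mode survives. At an $S$-$S$ corner with one coincidence and one non-coincidence leg, however, the leading mixed eigenfunction $r^{\pi/(2\alpha_i)}\sin(\tfrac{\pi}{2\alpha_i}\theta)$ is forced simultaneously to satisfy $y\ge0$ on the non-coincidence leg (giving $d\ge0$) and $\partial_n y\ge0$ on the coincidence leg (giving $d\le0$), hence $d=0$; this removes precisely the $j=1$ term, leaving $j\ge2$ and the leading singularity $r^{\pi/\alpha_i}$. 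The same mechanism at the flat free-boundary endpoints $c_{n+1},\dots,c_m$ ($\alpha_i=\pi$) kills the $r_i^{1/2}$ mode and leaves $r_i^{3/2}\Phi_i(\theta_i)$ as the leading term. Making this rigorous requires more than a sign restriction: one must show that the coefficient of the killed mode truly \emph{vanishes}. The argument is that, by Lemma~\ref{t:regy}, $\partial_n y$ is continuous up to $\Gamma_S$ away from $C$, and on each leg the reduction gives $y\equiv0$ respectively $\partial_n y\equiv0$ exactly; hence in a punctured neighborhood of the point the leading term dominates and its coefficient controls the signs of $y$ and of $\partial_n y$, so a nonzero coefficient would violate one of the pointwise Signorini inequalities. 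Finally I would patch the local expansions with cut-off functions to obtain the global decomposition with $y_R\in W^{2,p}(\Omega)$.
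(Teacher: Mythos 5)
Your overall strategy---localize via Theorem \ref{thm:coincideset1}, expand with the Kondrat'ev--Grisvard corner theory, and use the two Signorini sign conditions on the two legs to force the coefficient of the leading mixed mode to vanish at S--S corners and at the free endpoints---is exactly the route the paper takes, and your bookkeeping of the exponents, of the exclusion $\lambda_{i,j}\ne1$, of the exceptional set $P$, and of the compatibility conditions is correct.

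The one genuine gap sits precisely at the point you yourself flag as the crux: the assertion that ``in a punctured neighborhood of the point the leading term dominates.'' Writing $y=y_r+d\,r_i^{\pi/(2\alpha_i)}\sin(\pi\theta_i/(2\alpha_i))$ with $y_r=y_R+(\text{higher singular terms})$, the higher singular terms are indeed $o\bigl(r_i^{\pi/(2\alpha_i)}\bigr)$, but the regular part $y_R\in W^{2,p}$ is generically only $O(r_i)$, since its gradient at $c_i$ need not vanish a priori. Hence at an \emph{acute} Signorini--Signorini corner, where $\pi/(2\alpha_i)>1$, the remainder would swamp the singular term and the sign argument does not close; the continuity of $\partial_n y$ and the exact reduction to Dirichlet/Neumann legs, which is all you invoke, do not repair this. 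What is needed---and what the paper supplies---is the additional observation that the homogeneous Dirichlet trace of $y_R$ on one leg and the homogeneous Neumann trace on the other force $\nabla y_R(c_i)=0$ whenever the tangent of the first leg and the normal of the second are linearly independent (the exceptional openings $\alpha_i=\tfrac12\pi,\tfrac32\pi$ are harmless because there $\pi/(2\alpha_i)\le 1$); combined with the embedding $W^{2,p}\hookrightarrow C^{1,\beta}$, $\beta=1-2/p$, and a Taylor expansion with integral remainder, this yields $y_R=O(r_i^{1+\beta})=o\bigl(r_i^{\pi/(2\alpha_i)}\bigr)$, the strict inequality $\pi/(2\alpha_i)<2-2/p$ being guaranteed by $p\notin P$. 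The analogous estimate for $\partial_n y_r$ on the coincidence leg is also required when $\pi/(2\alpha_i)>1$. (Incidentally, at the free endpoints $c_{n+1},\dots,c_m$ your sign mechanism does work, since there $\pi/(2\alpha_i)=\tfrac12<1$ and the domination is immediate; the paper disposes of the $r_i^{1/2}$ mode more cheaply via the local $H^2$ regularity of Lemma \ref{t:regy}.)
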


\begin{remark}
  The compatibility conditions could be omitted, but then a
  singularity of the form $r(\Phi_1(\theta)+\log r\Phi_2(\theta))$
  with $\Phi_i$ smooth has to be added, see \cite[p.
  263]{grisvard:85a}.
\end{remark}

\begin{proof}
  Since we have a finite number of critical boundary points $c_i$ due
  to Theorem \ref{thm:coincideset1} we can treat them separately and
  use classical theory as described for instance in \cite[Corollary
  4.4.4.14]{grisvard:85a}. Let us discuss shortly the situation near the
  Signorini boundary.

  For $i\le n$ and $c_i\in\bar\Gamma_S\cap\bar\Gamma_D$ or
  $c_i\in\bar\Gamma_S\cap\bar\Gamma_N$ we do not know whether a
  Dirichlet or Neumann boundary condition occurs on $\Gamma_S$ near
  $c_i$. Therefore we consider the worst situation of mixed boundary
  conditions.

  In the remaining cases some singularities disappear at
  $c_i\in\bar\Gamma_S$:
  \begin{enumerate}
  \item For $i=n+1,\ldots,m$ the leading singularity is $r_i^{3/2}$
    since the term $r_i^{1/2}$ is not in $H^2(\Omega)$, compare the
    result in Lemma \ref{t:reg}, and see the discussion in e.\,g.
    \cite{ChristofHaubner18,MoussaouiKhodja1992}.
  \item For $i\le n$ and $c_i\in\Gamma_S$ the worst situation could be
    mixed. Let us consider the case that Dirichlet conditions is valid
    for $\theta_i=0$. Then we have in the vicinity of $c_i$
    \begin{equation}\label{expwith small remainder}
      y=y_r+
      dr_i^{\pi/(2\alpha_i)}\sin\left(\frac{\pi\theta_i}{2\alpha_i}\right).
    \end{equation}
    We show now $y_r=o(r_i^{\pi/(2\alpha_i)})$ such that this
    term is neglectable sufficiently close to $c_i$.  Indeed from
    \cite[Corollary 4.4.4.14]{grisvard:85a} near $c_i$, we have
    \begin{equation}\label{fullexp}
      y_r=y_R+ \sum_{j\in \mathbb{N}\setminus\{0\}:
      0<(j+1/2)\pi/\alpha_i <2-2/p\atop (j+1/2)\pi/\alpha_i \ne 1} 
      d_{i,j}r_i^{(j+1/2)\pi/\alpha_i} \sin((j+1/2)\pi \theta_i/\alpha_i),
    \end{equation}
    with $y_R\in W^{2,p}(\Omega\cap B(c_i, \rho))$ for $\rho$ small
    enough and $d_{i,j}\in \mathbb{R}$.  Consequently, near $c_i$,
    \begin{equation}\label{bcyR}
      y_{R}(r_i,0)=0,\quad \frac{\partial y_{R}}{\partial \theta_i}(r_i,\alpha_i)=0
     \quad \forall r_i<\rho.
    \end{equation}
    Notice that the Sobolev embedding theorem guarantees that
    \begin{equation}\label{eq:star}
      y_R\in C^{1,\beta}(\bar \Omega\cap \bar B(c_i, \rho))\ \text{with }\beta=1-2/p.
    \end{equation}
    
    We now notice that the second term in the sum in
    \eqref{fullexp} (if any) is trivially $o(r_i^{\pi/(2\alpha_i)})$,
    hence it remains to check the same behavior for $y_R$. We note
      that $\nabla y_R(c_i)=0$ except in the cases
      $\alpha_i=\frac12\pi$ or $\alpha_i=\frac32\pi$, and that
      $r_i^{(j+1/2)\pi/\alpha_i}$ is smooth when
      $\alpha_i=\frac12\pi$. For that purpose, we distinguish three
    cases.
    \begin{enumerate}
    \item If $\pi/(2\alpha_i)<1$, then by Taylor's theorem (and since
      $ y_R(c_i)=0$), we have
      \[
      y_R(x)=\nabla y_R(c_i)\cdot (x-c_i)+o(r_i),
      \]
      which yields $y_R(x)=O(r_i)=o(r_i^{\pi/(2\alpha_i)})$ as
      $\pi/(2\alpha_i)<1$.
    \item If $\pi/(2\alpha_i)=1$, then from \cite[Corollary
      4.4.4.14]{grisvard:85a}, we directly have
      \[
      y=y_R\in W^{2,p}(\Omega\cap B(c_i, \rho)),
      \]
      in other words the singular part is zero.
    \item If $\pi/(2\alpha_i)>1$, then owing to \eqref{bcyR} and the
      regularity of $y_R$, we actually have
      \[
      \nabla y_R(c_i)=0,
      \]
      hence by Taylor's expansion with an integral remainder, we have
      \[
      y_R(x)=\int_0^1 \nabla y_R(c_i+t(x-c_i)) (x-c_i)\,dt, \forall
      |x-c_i|<\rho.
      \]
      Therefore as $|\nabla y_R(c_i+t(x-c_i))| =|\nabla
      y_R(c_i+t(x-c_i))-\nabla y_R(c_i)|=O((tr_i)^{\beta})$ due to
      \eqref{eq:star}, one deduces that
      \[
      | y_R(x)|=O(r_i^{\beta+1})=o(r_i^{\pi/(2\alpha_i)}),
      \]
      as $\pi/(2\alpha_i)<\beta+1$. (In the case
        $\pi/(2\alpha_i)>\beta+1=2-2/p$ the solution $y$ is
        $W^{2,p}$-regular in the vicinity of $c_i$. Equality is
        excluded by assumption.)
    \end{enumerate}
    
    Coming back to \eqref{expwith small remainder}, for $\theta_i=0$ we get
    $\partial_ny=\partial_ny_r-d r_i^{\pi/(2\alpha_i)-1}$, hence
    $d\le0$ in order to satisfy the Signorini condition
    $\partial_ny\ge0$. For $\theta_i=\alpha_i$ we get
    $y=y_r+dr_i^{\pi/(2\alpha_i)}$, hence $d\ge0$ in order to satisfy
    the Signorini condition $y\ge0$. So we can have only $d=0$.
  \end{enumerate}
  Since all cases are treated the proof is complete.
\end{proof}

\begin{example}
  Let us shortly discuss the L-domain; that is a hexahedron with one
  interior angle $\alpha=\frac32\pi$ and all others being of size
  $\frac12\pi$. The leading singular term near the non-convex corner
  is of type $r^\lambda$ with $\lambda=\frac23$ if Signorini
  conditions are given at both legs of this angle, but with
  $\lambda=\frac13$ if a Signorini condition is given on one leg only,
  and a Dirichlet or Neumann condition at the other leg. These terms
  are in $H^s(\Omega)$ for $s<1+\lambda$ or in a suitable weighted
  Sobolev space. The set of exception values for $p$ is $P=\{3,6\}$.
\end{example}

\paragraph{Acknowledgment}
The authors thank Constantin Christof for pointing to an incorrect
argument in a previous version of the paper. The authors thank also
Christof Haubner for preparing the illustrations.

\bibliographystyle{abbrv}\bibliography{Signorini}

\end{document}